\documentclass[11pt]{amsart}

\usepackage{amsmath,amssymb,amsthm,mathtools}
\usepackage{enumitem}
\usepackage{hyperref}
\usepackage[margin=1.15in]{geometry}

\newtheorem{theorem}{Theorem}[section]
\newtheorem{lemma}[theorem]{Lemma}

\newtheorem{corollary}[theorem]{Corollary}

\newtheorem{problem}[theorem]{Problem}

\title{Rademacher Type and Enflo Type Revisited}

\author{Nika Areshidze}
\address{
Department of Mathematics,
University of California, Irvine,
CA 92697, USA
}
\email{nareshid@uci.edu}

\subjclass[2020]{Primary 46B09}

\keywords{Rademacher type, Enflo type, Hamming cube, vector-valued Poincar\'e-type inequality}

\begin{document}

\begin{abstract}
We establish a dimension-free vector-valued
Poincar\'e-type inequality on the discrete cube for Banach spaces with
Rademacher type $p$, $1\le p\le2$. As a consequence, we obtain
$$
T_p^R(X)\le T_p^E(X)\le p\,T_p^R(X).
$$
This gives an alternative proof that Rademacher type and Enflo type coincide,
while improving the constant $\pi/\sqrt{2}$ in the quantitative estimate of
Ivanisvili, van Handel, and Volberg \cite{IVV} to $p$.
\end{abstract}

\maketitle

\vspace{-2.3em}

\section{Introduction}

The relation between Rademacher type and its metric analogue, Enflo type,
is a classical problem in the geometry of Banach spaces. In his 1978 paper
\cite{Enflo}, Enflo formulated a metric inequality on finite-dimensional
cubes and asked whether Rademacher type $p$ implies the corresponding
dimension-free cube inequality. Important partial results and related
notions were obtained in \cite{BMW,NaorSchechtman,MendelNaor}. Related
dimension-free forms of Pisier's inequality \cite{Pisier} were studied in
\cite{Eskenazis}. The problem was finally resolved by Ivanisvili, van Handel,
and Volberg \cite{IVV}, who proved that Rademacher type and Enflo type
coincide by establishing a dimension-free analogue of Pisier's inequality
on the discrete cube.

Let $(X,\|\cdot\|_{X})$ be a Banach space. We say that $X$ has Rademacher type
$p\in[1,2]$ if there exists $C\in(0,\infty)$ such that for every $n\ge1$,
every $x_1,\ldots,x_n\in X$, and independent Rademacher variables
$\varepsilon_1,\ldots,\varepsilon_n$,
$$
\mathbb E_{\varepsilon}
\left\|
\sum_{j=1}^n\varepsilon_jx_j
\right\|_X^p
\le
C^p\sum_{j=1}^n\|x_j\|_X^p.
$$
We denote by $T_p^R(X)$ the smallest possible constant $C$ in this
inequality.

We say that $X$ has Enflo type $p$ if there exists $C\in(0,\infty)$ such
that for every $n\ge1$ and every function $f:\{-1,1\}^n\to X$,
$$
\mathbb E_{\varepsilon}
\left\|
\frac{f(\varepsilon)-f(-\varepsilon)}{2}
\right\|_X^p
\le
C^p\sum_{j=1}^n
\mathbb E_{\varepsilon}\|D_jf(\varepsilon)\|_X^p.
$$
We denote by $T_p^E(X)$ the smallest possible constant $C$ in this
inequality.

Using random-reveal martingales, we give an alternative proof of the
equivalence between Rademacher type and Enflo type. The main ingredient is
the following dimension-free vector-valued Poincar\'e-type inequality.

\begin{theorem}
Let $X$ be a Banach space, let $1\le p\le2$, and suppose that $X$ has
Rademacher type $p$. Then for every $n\ge1$ and every $f:\Omega_n\to X$,
$$
\|f-\mathbb Ef\|_{L^p(\Omega_n,X)}
\le
p\,T_p^R(X)
\left(
\sum_{j=1}^{n}
\|D_jf\|_{L^p(\Omega_n,X)}^p
\right)^{1/p}.
$$
\end{theorem}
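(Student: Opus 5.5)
We take $\Omega_n=\{-1,1\}^n$ with the uniform measure and $D_jf(\varepsilon)=\frac{f(\varepsilon)-f(\varepsilon^{(j)})}{2}$, where $\varepsilon^{(j)}$ is $\varepsilon$ with the $j$th coordinate flipped, so that $D_jf=\varepsilon_j\,\delta_jf$ and $\delta_jf$ does not depend on $\varepsilon_j$. The plan is to write $f-\mathbb Ef$ as a continuous-time ``random-reveal'' martingale and then apply Rademacher type at each time, not martingale type.

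\textbf{The interpolation.} Let $\varepsilon,\varepsilon'$ be independent uniform points of $\Omega_n$. Let $U_1,\dots,U_n$ be i.i.d.\ uniform on $[0,1]$, independent of everything else, and let $S_t=\{j:U_j\le t\}$ be the set of coordinates revealed by time $t$. Put
$$
z(t)_j=\begin{cases}\varepsilon_j,& j\in S_t,\\ \varepsilon'_j,& j\notin S_t,\end{cases}
\qquad
F(t)=\mathbb E\bigl[f(z(t))\,\big|\,\varepsilon,S_t\bigr].
$$
Then $F(0)=\mathbb Ef$, $F(1)=f(\varepsilon)$, and $\mathbb E_{U}F(t)=\sum_{S}t^{|S|}(1-t)^{n-|S|}\,\mathbb E_{S^c}f$. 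Differentiating in $t$ (which is elementary, since $\mathbb E_U F(t)$ is a polynomial in $t$) and using $\mathbb E_{S\cup\{j\}}f-\mathbb E_{S}f=\mathbb E_{S^c}[\varepsilon_j\delta_jf]$, I expect an identity of the form
$$
f(\varepsilon)-\mathbb Ef=\int_0^1 \frac1t\,\mathbb E_{U,\varepsilon'}\Bigl[\sum_{j\in S_t}\varepsilon_j\,\delta_jf(z(t))\Bigr]\,dt .
$$
The factor $1/t$ compensates for the fact that coordinate $j$ lies in $S_t$ with probability $t$. Equivalently, the derivative can be written as $\sum_j \mathbb E[D_jf(z(t))\,(\text{mean-zero weight } \xi_j(t))]$ with $\|\xi_j(t)\|_\infty\lesssim 1/t$.

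By Minkowski's inequality in $L^p(\Omega_n,X)$ and Jensen's inequality (to move $\mathbb E_{U,\varepsilon'}$ outside the norm), it then suffices to prove, for each fixed $t$,
$$
\Bigl\|\sum_{j\in S_t}\varepsilon_j\,\delta_jf(z(t))\Bigr\|_{L^p}\le T_p^R(X)\,t^{1/p}\Bigl(\sum_j\|D_jf\|_{L^p}^p\Bigr)^{1/p}.
$$
Once this holds, the $t$-integral is $\int_0^1 t^{1/p-1}\,dt=p$, which is exactly the claimed constant.

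\textbf{The per-time bound.} If the signs $\varepsilon_j$ for $j\in S_t$ were independent of the vectors $\delta_jf(z(t))$, Rademacher type conditional on everything else would give
$$
\mathbb E\Bigl\|\sum_{j\in S_t}\varepsilon_j\delta_jf(z(t))\Bigr\|^p\le (T_p^R)^p\sum_j\mathbb P(j\in S_t)\,\mathbb E\|\delta_jf\|^p=(T_p^R)^p\, t\sum_j\|D_jf\|_p^p ,
$$
since $z(t)$ is uniformly distributed on $\Omega_n$. That is precisely the required factor $t^{1/p}$.

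\textbf{Main obstacle.} The vector $\delta_jf(z(t))$ depends on the revealed signs $\varepsilon_k$ with $k\in S_t$, $k\ne j$, so these signs are not independent of the vectors and Rademacher type cannot be applied directly. I would resolve this by a decoupling or symmetrization step, in one of two ways:
\begin{itemize}[leftmargin=2em]
\item Rewrite the time derivative using an auxiliary independent Rademacher vector $\eta$, so that the weights become $\eta_j$ multiplying vectors built from $D_jf$ evaluated at a point independent of $\eta$. The natural device is to reveal coordinate $j$ as $\eta_j$ times a sign-invariant quantity, as in the Ivanisvili--van Handel--Volberg construction.
\item Alternatively, exploit that $\mathbb E_{\varepsilon_j}$-symmetry lets each $\varepsilon_j$ be replaced by $\varepsilon_j\eta_j$ without changing the law of the whole family.
\end{itemize}
Checking that such a rewriting preserves both the identity above and the exact weight $1/t$ (so that the constant comes out as $p$ rather than something like $\pi/\sqrt2$) is the step where I expect the real work to lie.
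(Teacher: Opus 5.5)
Your interpolation identity with the weight $1/t$ is correct. The gap is that the per-time inequality you reduce it to is false, so no decoupling or symmetrization step can establish it. For $j\in S_t$ you have $z(t)_j=\varepsilon_j$, hence $\varepsilon_j\,\delta_jf(z(t))=D_jf(z(t))$. You would therefore need to bound $\bigl\|\sum_{j\in S_t}D_jf(z(t))\bigr\|_{L^p}$, where $z(t)$ is uniform and independent of $S_t$.

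Take $X=\mathbb R$, $p=2$ (so $T_2^R(\mathbb R)=1$) and $f=W_{[n]}$. Then $D_jf=W_{[n]}$ for every $j$. The square of your left side is $\mathbb E|S_t|^2=nt+n(n-1)t^2$, and the square of your right side is $nt$. So the inequality fails for every $t\in(0,1]$ once $n\ge2$. After integrating against $dt/t$, your chain of estimates gives at least $\sqrt{n(n-1)}$, where the theorem needs $2\sqrt n$.

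The defect is structural. When you sum over \emph{revealed} coordinates, each vector depends on the other revealed signs. Even before applying Jensen in $\varepsilon'$, you get $\mathbb E_{\varepsilon'}\sum_{j\in S}\varepsilon_j\delta_jf(z)=\sum_{T\subseteq S}|T|\,\widehat f(T)W_T=\sum_jD_j(E_Sf)$. This is a number-operator expression, and its norm is not controlled by $\bigl(\sum_j\|D_jf\|_{L^p}^p\bigr)^{1/p}$ (take $S=[n]$ in the example above).

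The missing idea is to differentiate by \emph{adding an unrevealed} coordinate instead of removing a revealed one. The event $\{j\in S_t\}$ has probability $t$ and is independent of $S_t\setminus\{j\}$. So the Russo--Margulis derivative $\sum_j\mathbb E_U\bigl[E_{S_t\cup\{j\}}f-E_{S_t\setminus\{j\}}f\bigr]$ can be localized on $\{j\notin S_t\}$ rather than on $\{j\in S_t\}$. For $0<t<1$ this gives
\[
\frac{d}{dt}\mathbb E_UF(t)=\frac1{1-t}\,\mathbb E_U\sum_{j\notin S_t}d_j^{S_t}f=\frac1{1-t}\,\mathbb E_{U,\varepsilon'}\Bigl[\sum_{j\notin S_t}\varepsilon_j\,\delta_jf(z(t))\Bigr].
\]
For $j\notin S_t$, the vector $\delta_jf(z(t))$ depends only on $\varepsilon_{S_t}$ and $\varepsilon'$. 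Conditionally on $(U,\varepsilon_{S_t},\varepsilon')$, this is therefore a genuine Rademacher sum in the unrevealed signs. Combining Rademacher type, Jensen, $\mathbb P(j\notin S_t)=1-t$ and the uniformity of $z(t)$ gives the per-time bound
\[
T_p^R(X)\,(1-t)^{1/p-1}\Bigl(\sum_j\|D_jf\|_{L^p}^p\Bigr)^{1/p},
\]
and $\int_0^1(1-t)^{1/p-1}\,dt=p$.

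With this change your scheme is exactly the continuous-time form of the paper's proof. The paper reveals coordinates along a uniform random permutation and groups the step-$k$ increments as $\sum_{j\notin A}d_j^Af$ over $|A|=k-1$, with weight $1/(n-k+1)$. It then applies type to that sum through the factorization $d_j^Af=\varepsilon_j\,g_j^A(\varepsilon_A)$, whose coefficients depend only on the already revealed coordinates.
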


As a consequence, we obtain
$$
T_p^R(X)\le T_p^E(X)\le p\,T_p^R(X),
\qquad 1\le p\le2.
$$
The quantitative bound obtained in \cite{IVV} was
$$
T_p^E(X)\le \frac{\pi}{\sqrt{2}}\,T_p^R(X).
$$
Thus, since $p\le2<\pi/\sqrt{2}$, our argument improves the constant
$\pi/\sqrt{2}$ to $p$.

The paper is organized as follows. In Section~2 we introduce the notation
and the random-reveal martingales used in the proof. In Section~3 we prove
the dimension-free  vector-valued Poincar\'e-type inequality and deduce the equivalence between
Rademacher type and Enflo type.

\section{Preliminaries}

Let $(X,\|\cdot\|_X)$ be a Banach space and let $\Omega_n=\{-1,1\}^n$ denote the $n$-dimensional Hamming cube equipped with the uniform probability measure, and let $[n]=\{1,\ldots,n\}$. For a function $f:\Omega_n\to X$, we write
$$
\mathbb Ef
=
\frac{1}{2^n}\sum_{\varepsilon\in\Omega_n}f(\varepsilon),
$$
and, for $1\le p<\infty$,
$$
\|f\|_{L^p(\Omega_n,X)}
=
\left(
\mathbb E\|f(\varepsilon)\|_X^p
\right)^{1/p}.
$$
For $A\subseteq[n]$, we write
$\varepsilon_A=(\varepsilon_j)_{j\in A}$. Every function $f:\Omega_n\to X$ admits a Walsh expansion
$$
f(\varepsilon)
=
\sum_{S\subseteq[n]}\widehat f(S)W_S(\varepsilon),
$$
where
$$
W_S(\varepsilon)
=
\prod_{j\in S}\varepsilon_j,
\qquad
\widehat f(S)
=
\mathbb E\big[f(\varepsilon)W_S(\varepsilon)\big],
$$
with $W_\varnothing=1$. For $\varepsilon=(\varepsilon_1,\ldots,\varepsilon_n)\in\Omega_n$ and
$j\in[n]$, let
$$
\varepsilon^{(j)}
=
(\varepsilon_1,\ldots,-\varepsilon_j,\ldots,\varepsilon_n)
$$
be obtained from $\varepsilon$ by flipping its $j$-th coordinate. For a function $f:\Omega_n\to X$, define
$$
D_jf(\varepsilon)
=
\frac{f(\varepsilon)-f(\varepsilon^{(j)})}{2}.
$$

For a subset $A\subseteq[n]$, let
$$
\mathcal F_A
=
\sigma(\varepsilon_j:j\in A)
$$
be the $\sigma$-algebra generated by the coordinates indexed by $A$, and write
$$
E_Af
=
\mathbb E[f\mid\mathcal F_A]
$$
for the corresponding conditional expectation. Then clearly
$$
E_Af=\sum_{S\subseteq A}\widehat f(S)W_S(\varepsilon).
$$

If $A\subseteq[n]$ and $j\notin A$, we define the increment associated with revealing the $j$-th coordinate by
$$
d_j^Af
=
E_{A\cup\{j\}}f-E_Af.
$$

Finally, let $S_n$ denote the symmetric group on $[n]$, equipped with the uniform probability measure. Let $\pi=(\pi_1,\ldots,\pi_n)\in S_n$ be a uniformly distributed random permutation. For $0\le k\le n$, set
$$
A_k^\pi=\{\pi_1,\ldots,\pi_k\},
$$
with $A_0^\pi=\varnothing$, and define $M_k^\pi=E_{A_k^\pi}f$. Thus,
$$
M_0^\pi=\mathbb Ef,
\quad
M_n^\pi=f.
$$
Moreover, for every fixed $\pi\in S_n$, the sequence
$(M_k^\pi)_{k=0}^n$ is a martingale with respect to the filtration
$(\mathcal F_{A_k^\pi})_{k=0}^n$, and its martingale differences are given by
$$
M_k^\pi-M_{k-1}^\pi
=
d_{\pi_k}^{A_{k-1}^\pi}f.
$$

For $j\notin A$, the reveal increment and the discrete derivative satisfy
$$
d_j^Af=E_{A\cup\{j\}}D_jf.
$$
This follows from the Walsh--Fourier expansion. Let $1\le p<\infty$. From the identity above and the fact that conditional expectation is a contraction on $L^p$, we obtain
$$
\|d_j^Af\|_{L^p(\Omega_n,X)}
\le
\|D_jf\|_{L^p(\Omega_n,X)}.
$$

\begin{lemma}
Let $1\le p\le 2$, and suppose $X$ has Rademacher type $p$. Then for every $A\subseteq[n]$,
$$
\left\|\sum_{j\notin A}d_j^Af\right\|_{L^p(\Omega_n,X)}
\le
T_p^R(X)
\left(
\sum_{j\notin A}
\|d_j^Af\|_{L^p(\Omega_n,X)}^p
\right)^{1/p}.
$$
Consequently,
$$
\left\|\sum_{j\notin A}d_j^Af\right\|_{L^p(\Omega_n,X)}
\le
T_p^R(X)
\left(
\sum_{j\notin A}
\|D_jf\|_{L^p(\Omega_n,X)}^p
\right)^{1/p}.
$$
\end{lemma}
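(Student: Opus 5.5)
Fix $A\subseteq[n]$ and write $B=[n]\setminus A$. The plan is to realize the sum $\sum_{j\in B}d_j^Af$ pointwise as a Rademacher sum in the coordinates $\varepsilon_B$ with coefficients depending only on $\varepsilon_A$. Then we apply the type inequality fiberwise, for each fixed $\varepsilon_A$.

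By the Walsh expansion,
$$
d_j^Af=E_{A\cup\{j\}}f-E_Af=\sum_{S\subseteq A}\widehat f(S\cup\{j\})W_S(\varepsilon)\,\varepsilon_j .
$$
So $d_j^Af(\varepsilon)=\varepsilon_j\,g_j(\varepsilon_A)$, where
$$
g_j(\varepsilon_A)=\sum_{S\subseteq A}\widehat f(S\cup\{j\})W_S(\varepsilon)
$$
depends only on the coordinates in $A$. In particular $\|d_j^Af(\varepsilon)\|_X=\|g_j(\varepsilon_A)\|_X$.

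Now split the expectation over $\Omega_n$ as an expectation over $\varepsilon_A$ followed by one over $\varepsilon_B$, which is legitimate by independence of the coordinates. For fixed $\varepsilon_A$, the coordinates $(\varepsilon_j)_{j\in B}$ are independent Rademacher variables. The definition of $T_p^R(X)$ applied to the vectors $x_j=g_j(\varepsilon_A)$ then gives
$$
\mathbb E_{\varepsilon_B}\Big\|\sum_{j\in B}\varepsilon_j g_j(\varepsilon_A)\Big\|_X^p\le T_p^R(X)^p\sum_{j\in B}\|g_j(\varepsilon_A)\|_X^p .
$$
Averaging over $\varepsilon_A$, and using that $\mathbb E_{\varepsilon_A}\|g_j(\varepsilon_A)\|_X^p=\|d_j^Af\|_{L^p(\Omega_n,X)}^p$, yields the first inequality after taking $p$-th roots.

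The second inequality follows immediately from the first together with the bound $\|d_j^Af\|_{L^p}\le\|D_jf\|_{L^p}$ established just before the lemma.

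The only real point is the factorization $d_j^Af=\varepsilon_j g_j(\varepsilon_A)$. Once that is in hand, the argument is a direct conditional application of the type inequality.
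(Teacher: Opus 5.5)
Your proposal is correct and follows the paper's proof essentially step for step. The paper also factors $d_j^Af=\varepsilon_j g_j^A(\varepsilon_A)$, applies the type inequality fiberwise in $\varepsilon_{A^c}$, averages over $\varepsilon_A$, and then uses $\|d_j^Af\|_{L^p}\le\|D_jf\|_{L^p}$ to get the second inequality.
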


\begin{proof}
Let $j\notin A$. Notice that
$$
d_j^Af
=
\varepsilon_j
\sum_{S\subseteq A}
\widehat f(S\cup\{j\})W_S.
$$
Thus, we can write
$$
d_j^Af(\varepsilon)
=
\varepsilon_jg_j^A(\varepsilon_A),
$$
where
$$
g_j^A(\varepsilon_A)
=
\sum_{S\subseteq A}
\widehat f(S\cup\{j\})W_S(\varepsilon).
$$
For fixed $\varepsilon_A$, the variables $(\varepsilon_j)_{j\notin A}$ are independent Rademacher variables. Since $X$ has Rademacher type $p$,
$$
\mathbb E_{\varepsilon_{A^c}}
\left\|
\sum_{j\notin A}d_j^Af(\varepsilon)
\right\|_X^p
\le
T_p^R(X)^p
\sum_{j\notin A}
\|g_j^A(\varepsilon_A)\|_X^p.
$$
Averaging over $\varepsilon_A$, we obtain
$$
\mathbb E_{\varepsilon}
\left\|
\sum_{j\notin A}d_j^Af(\varepsilon)
\right\|_X^p
\le
T_p^R(X)^p
\sum_{j\notin A}
\mathbb E_{\varepsilon_A}
\|g_j^A(\varepsilon_A)\|_X^p.
$$
Since
$$
\|d_j^Af(\varepsilon)\|_X
=
\|g_j^A(\varepsilon_A)\|_X,
$$
and $d_j^Af$ does not depend on the remaining coordinates, taking the $p$-th root gives the first inequality. The second follows from
$$
\|d_j^Af\|_{L^p(\Omega_n,X)}
\le
\|D_jf\|_{L^p(\Omega_n,X)}.
$$
\end{proof}

\begin{lemma}
For every $1\le k\le n$,
$$
\mathbb{E}_{\pi}d_{\pi_k}^{A_{k-1}^{\pi}}f
=
\frac{1}{\binom{n}{k-1}(n-k+1)}
\sum_{\substack{A\subseteq[n]\\ |A|=k-1}}
\sum_{j\notin A}d_j^Af.
$$
\end{lemma}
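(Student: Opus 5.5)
The plan is a direct computation of the law of the pair $(A_{k-1}^\pi,\pi_k)$ under a uniformly random permutation $\pi\in S_n$. The quantity $d_{\pi_k}^{A_{k-1}^\pi}f$ depends on $\pi$ only through this pair. Since $\pi_k\notin A_{k-1}^\pi$ always, the pair takes values in
$$
\mathcal P_k=\{(A,j): A\subseteq[n],\ |A|=k-1,\ j\notin A\}.
$$
We then have
$$
\mathbb E_\pi d_{\pi_k}^{A_{k-1}^\pi}f=\sum_{(A,j)\in\mathcal P_k}\mathbb P_\pi\big(A_{k-1}^\pi=A,\ \pi_k=j\big)\,d_j^Af .
$$
It therefore suffices to show that this probability is the same for every element of $\mathcal P_k$, namely $1/|\mathcal P_k|$.

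To compute it, fix $(A,j)\in\mathcal P_k$ and count the permutations with $\{\pi_1,\ldots,\pi_{k-1}\}=A$ and $\pi_k=j$. The first $k-1$ entries can be any ordering of $A$, which gives $(k-1)!$ choices. The $k$-th entry is forced to be $j$. The remaining $n-k$ entries are an arbitrary ordering of $[n]\setminus(A\cup\{j\})$, which gives $(n-k)!$ choices. The probability is therefore
$$
\frac{(k-1)!\,(n-k)!}{n!}.
$$

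Finally, we identify this with the claimed normalization. The number of pairs in $\mathcal P_k$ is $|\mathcal P_k|=\binom{n}{k-1}(n-k+1)$, and
$$
\binom{n}{k-1}(n-k+1)=\frac{n!}{(k-1)!\,(n-k+1)!}(n-k+1)=\frac{n!}{(k-1)!\,(n-k)!}.
$$
So the probability above equals $1/\big(\binom{n}{k-1}(n-k+1)\big)$, which is exactly the reciprocal of $|\mathcal P_k|$. Substituting this into the sum gives the stated formula, with the double sum over $|A|=k-1$ and $j\notin A$ enumerating $\mathcal P_k$.

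There is no genuine obstacle; the only care needed is the factorial bookkeeping in the last step. As an alternative, uniformity also follows from symmetry: $S_n$ acts transitively on $\mathcal P_k$, and the uniform measure is invariant under this action.
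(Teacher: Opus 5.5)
Your proposal is correct and follows essentially the same route as the paper: split the expectation according to the pair $(A_{k-1}^\pi,\pi_k)=(A,j)$, count the $(k-1)!\,(n-k)!$ permutations realizing each pair, and normalize by $n!$. You also write out the identity $\binom{n}{k-1}(n-k+1)=n!/\big((k-1)!\,(n-k)!\big)$, which the paper leaves implicit.
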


\begin{proof}
Notice that
$$
\mathbb{E}_{\pi}d_{\pi_k}^{A_{k-1}^{\pi}}f
=
\frac{1}{n!}
\sum_{\substack{A\subseteq[n]\\ |A|=k-1}}
\sum_{j\notin A}
\sum_{\substack{\pi\in S_n\\
\{\pi_1,\ldots,\pi_{k-1}\}=A\\
\pi_k=j}}
d_j^Af.
$$
For fixed $A$ and $j\notin A$, there are $(k-1)!(n-k)!$ such permutations.
Therefore,
$$
\mathbb{E}_{\pi}d_{\pi_k}^{A_{k-1}^{\pi}}f
=
\frac{1}{\binom{n}{k-1}(n-k+1)}
\sum_{\substack{A\subseteq[n]\\ |A|=k-1}}
\sum_{j\notin A}d_j^Af.
$$
\end{proof}

\section{Proof of the Main Results}
\begin{proof}[Proof of Theorem~1.1]
For every $\pi\in S_n$,
$$
f-\mathbb{E}f
=
M_n^\pi-M_0^\pi
=
\sum_{k=1}^{n}d_{\pi_k}^{A_{k-1}^\pi}f.
$$
Averaging over $\pi$ and using the triangle inequality,
$$
\|f-\mathbb{E}f\|_{L^p(\Omega_n,X)}
\le
\sum_{k=1}^{n}
\left\|
\mathbb{E}_{\pi}d_{\pi_k}^{A_{k-1}^{\pi}}f
\right\|_{L^p(\Omega_n,X)}.
$$

By the preceding lemmas,
$$
\begin{aligned}
\left\|
\mathbb{E}_{\pi}d_{\pi_k}^{A_{k-1}^{\pi}}f
\right\|_{L^p(\Omega_n,X)}
&\le
\frac{T_p^R(X)}{n-k+1}
\frac{1}{\binom{n}{k-1}}
\sum_{\substack{A\subseteq[n]\\ |A|=k-1}}
\left(
\sum_{j\notin A}
\|D_jf\|_{L^p(\Omega_n,X)}^p
\right)^{1/p}.
\end{aligned}
$$
By Jensen's inequality,
$$
\begin{aligned}
\left\|
\mathbb{E}_{\pi}d_{\pi_k}^{A_{k-1}^{\pi}}f
\right\|_{L^p(\Omega_n,X)}
&\le
\frac{T_p^R(X)}{n-k+1}
\left(
\frac{1}{\binom{n}{k-1}}
\sum_{\substack{A\subseteq[n]\\ |A|=k-1}}
\sum_{j\notin A}
\|D_jf\|_{L^p(\Omega_n,X)}^p
\right)^{1/p}.
\end{aligned}
$$

For each fixed $j\in[n]$, there are $\binom{n-1}{k-1}$ sets
$A\subseteq[n]$ such that $|A|=k-1$ and $j\notin A$. Hence
$$
\left\|
\mathbb{E}_{\pi}d_{\pi_k}^{A_{k-1}^{\pi}}f
\right\|_{L^p(\Omega_n,X)}
\le
T_p^R(X)n^{-1/p}(n-k+1)^{1/p-1}
\left(
\sum_{j=1}^{n}
\|D_jf\|_{L^p(\Omega_n,X)}^p
\right)^{1/p}.
$$

Therefore,
$$
\begin{aligned}
\|f-\mathbb{E}f\|_{L^p(\Omega_n,X)}
&\le
T_p^R(X)n^{-1/p}
\left(
\sum_{k=1}^{n}(n-k+1)^{1/p-1}
\right)
\left(
\sum_{j=1}^{n}
\|D_jf\|_{L^p(\Omega_n,X)}^p
\right)^{1/p}
\\
&=
T_p^R(X)n^{-1/p}
\left(
\sum_{m=1}^{n}m^{1/p-1}
\right)
\left(
\sum_{j=1}^{n}
\|D_jf\|_{L^p(\Omega_n,X)}^p
\right)^{1/p}.
\end{aligned}
$$
Since $x^{1/p-1}$ is decreasing and integrable on $(0,n]$,
$$
\sum_{m=1}^{n}m^{1/p-1}
\le
\int_0^n x^{1/p-1}\,dx
=
pn^{1/p}.
$$
Consequently,
$$
\|f-\mathbb{E}f\|_{L^p(\Omega_n,X)}
\le
p\, T_p^R(X)
\left(
\sum_{j=1}^{n}
\|D_jf\|_{L^p(\Omega_n,X)}^p
\right)^{1/p}.
$$
\end{proof}

\begin{corollary}
Let $X$ be a Banach space and let $1\le p\le2$. Then
$$
T_p^R(X)\le T_p^E(X)\le p\,T_p^R(X).
$$
In particular, Rademacher type $p$ and Enflo type $p$ coincide.
\end{corollary}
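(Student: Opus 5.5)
The corollary has two inequalities, and I will prove them separately. The upper bound comes from Theorem~1.1 through a standard symmetrization. The lower bound comes from restricting the Enflo inequality to linear functions.

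\textbf{Upper bound $T_p^E(X)\le p\,T_p^R(X)$.} Fix $f:\Omega_n\to X$ and set
$$
g(\varepsilon)=\frac{f(\varepsilon)-f(-\varepsilon)}{2}.
$$
Then $g$ is odd, so $\mathbb Eg=0$, and Theorem~1.1 gives
$$
\|g\|_{L^p}\le p\,T_p^R(X)\Big(\sum_j\|D_jg\|_{L^p}^p\Big)^{1/p}.
$$
Next I bound $D_jg$. Since $(-\varepsilon)^{(j)}=-(\varepsilon^{(j)})$, we have
$$
D_jg(\varepsilon)=\tfrac12\big(D_jf(\varepsilon)-D_jf(-\varepsilon)\big).
$$
The map $\varepsilon\mapsto-\varepsilon$ preserves the uniform measure, so the triangle inequality yields $\|D_jg\|_{L^p}\le\|D_jf\|_{L^p}$. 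Combining the two displays,
$$
\Big(\mathbb E\Big\|\frac{f(\varepsilon)-f(-\varepsilon)}2\Big\|^p\Big)^{1/p}\le p\,T_p^R(X)\Big(\sum_j\mathbb E\|D_jf\|^p\Big)^{1/p}.
$$
This is exactly the Enflo inequality with constant $p\,T_p^R(X)$. If $T_p^R(X)=\infty$ there is nothing to prove.

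\textbf{Lower bound $T_p^R(X)\le T_p^E(X)$.} Given $x_1,\dots,x_n\in X$, I apply the Enflo inequality to the linear function
$$
f(\varepsilon)=\sum_j\varepsilon_jx_j.
$$
Two short computations are needed. First, $\tfrac12\big(f(\varepsilon)-f(-\varepsilon)\big)=f(\varepsilon)=\sum_j\varepsilon_jx_j$. Second, $D_jf(\varepsilon)=\varepsilon_jx_j$, so $\mathbb E\|D_jf\|^p=\|x_j\|^p$. Substituting these, the Enflo inequality becomes precisely the Rademacher type inequality with constant $T_p^E(X)$. Taking infima over admissible constants gives $T_p^R(X)\le T_p^E(X)$.

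Finiteness of one constant implies finiteness of the other, so Rademacher type $p$ and Enflo type $p$ coincide. The only step needing any care is the identity $D_jg(\varepsilon)=\tfrac12\big(D_jf(\varepsilon)-D_jf(-\varepsilon)\big)$ together with measure invariance under $\varepsilon\mapsto-\varepsilon$. Both are routine, so I expect no real obstacle.
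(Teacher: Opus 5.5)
Your proof is correct, and your lower bound $T_p^R(X)\le T_p^E(X)$ is exactly the paper's argument. For the upper bound you use a slightly different reduction.

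The paper applies Theorem~1.1 to $f$ itself. It first uses convexity of $x\mapsto\|x\|_X^p$ and invariance of the measure under $\varepsilon\mapsto-\varepsilon$ to get $\mathbb E\|\tfrac12(f(\varepsilon)-f(-\varepsilon))\|_X^p\le\|f-\mathbb Ef\|_{L^p(\Omega_n,X)}^p$, so the symmetrization is absorbed on the left-hand side.

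You instead apply Theorem~1.1 to the odd part $g$, which automatically has mean zero. You then absorb the symmetrization on the right-hand side via $D_jg(\varepsilon)=\tfrac12(D_jf(\varepsilon)-D_jf(-\varepsilon))$ and Minkowski.

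Both are one-line reductions giving the same constant $p\,T_p^R(X)$. The paper's version shows the Enflo quantity is dominated by the Poincar\'e quantity $\|f-\mathbb Ef\|_{L^p(\Omega_n,X)}$ for every $f$. Yours yields an intermediate bound with $\|D_jg\|_{L^p(\Omega_n,X)}$ in place of $\|D_jf\|_{L^p(\Omega_n,X)}$.
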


\begin{proof}
The inequality $T_p^R(X)\le T_p^E(X)$ is immediate by applying the Enflo
type inequality to the linear function
$$
f(\varepsilon)=\sum_{j=1}^n\varepsilon_jx_j.
$$
It remains to prove the reverse estimate. For $f:\Omega_n\to X$, we have
$$
\frac{f(\varepsilon)-f(-\varepsilon)}{2}
=
\frac{(f(\varepsilon)-\mathbb Ef)-(f(-\varepsilon)-\mathbb Ef)}{2}.
$$
By convexity of $x\mapsto\|x\|_X^p$,
$$
\left\|
\frac{f(\varepsilon)-f(-\varepsilon)}{2}
\right\|_X^p
\le
\frac12\|f(\varepsilon)-\mathbb Ef\|_X^p
+
\frac12\|f(-\varepsilon)-\mathbb Ef\|_X^p.
$$
Averaging over $\varepsilon$ and using that $\varepsilon$ and
$-\varepsilon$ have the same distribution, we obtain
$$
\mathbb E_\varepsilon
\left\|
\frac{f(\varepsilon)-f(-\varepsilon)}{2}
\right\|_X^p
\le
\|f-\mathbb Ef\|_{L^p(\Omega_n,X)}^p.
$$
Applying Theorem~1.1 gives
$$
\mathbb E_\varepsilon
\left\|
\frac{f(\varepsilon)-f(-\varepsilon)}{2}
\right\|_X^p
\le
p^p\,T_p^R(X)^p
\sum_{j=1}^n
\|D_jf\|_{L^p(\Omega_n,X)}^p.
$$
Hence
$$
T_p^E(X)\le p\,T_p^R(X),
$$
which completes the proof.
\end{proof}
 
\begin{problem}
For $1< p\le2$, define
$$
C_p
=
\sup_X
\frac{T_p^E(X)}{T_p^R(X)},
$$
where the supremum is taken over all nonzero Banach spaces $X$ with Rademacher type $p$. By the preceding corollary,
$$
1\le C_p\le p.
$$
Is
$$
C_p=p?
$$
More generally, determine the optimal value of $C_p$.
\end{problem}

\section*{Acknowledgments}
The author acknowledges the use of AI tools. All mathematical arguments and proofs in the final manuscript were checked and written by the author.


\begin{thebibliography}{9}

\bibitem{BMW}
J.~Bourgain, V.~Milman, and H.~Wolfson,
\emph{On type of metric spaces},
Trans. Amer. Math. Soc. \textbf{294} (1986), no.~1, 295--317.

\bibitem{Enflo}
P.~Enflo,
\emph{On infinite-dimensional topological groups},
S\'eminaire sur la G\'eom\'etrie des Espaces de Banach
(1977--1978), Exp.~No.~10--11,
\'Ecole Polytechnique, Palaiseau, 1978, 11 pp.

\bibitem{Eskenazis}
A.~Eskenazis,
\emph{On Pisier's inequality for UMD targets},
Canad. Math. Bull. \textbf{64} (2021), no.~2, 282--291.

\bibitem{IVV}
P.~Ivanisvili, R.~van Handel, and A.~Volberg,
\emph{Rademacher type and Enflo type coincide},
Ann. of Math. (2) \textbf{192} (2020), no.~2, 665--678.

\bibitem{MendelNaor}
M.~Mendel and A.~Naor,
\emph{Scaled Enflo type is equivalent to Rademacher type},
Bull. Lond. Math. Soc. \textbf{39} (2007), no.~3, 493--498.

\bibitem{NaorSchechtman}
A.~Naor and G.~Schechtman,
\emph{Remarks on non linear type and Pisier's inequality},
J. Reine Angew. Math. \textbf{552} (2002), 213--236.

\bibitem{Pisier}
G.~Pisier,
\emph{Probabilistic methods in the geometry of Banach spaces},
in Probability and Analysis (Varenna, 1985),
Lecture Notes in Math. \textbf{1206},
Springer, Berlin, 1986, 167--241.

\end{thebibliography}
\end{document}